\theoremstyle{plain}
\newtheorem{thm}{Theorem}[section]
\newtheorem{lem}[thm]{Lemma}
\theoremstyle{definition}
\numberwithin{equation}{section}
\begin{document}
\mathindent1em
\jot4pt

\title[$\ldots$]
{Normal Curvature of Pseudo-umbilical \\Submanifolds in a Sphere}
\author{Majid Ali Choudhary}
\address{Majid Ali Choudhary, Department of Mathematics, Zakir Husain Delhi College (E), Delhi, India.}
\email{majid\_alichoudhary@yahoo.co.in}
\subjclass[2010]{ 53C40, 53A10}
\keywords{ Pseudo-umbilical submanifold, normal cutvature}

\date{}
\begin{abstract}\baselineskip12pt

Let $M^n$ be a compact pseudo-umbilical submanifold  of the unit sphere $S^{n+p}$.
In the present note, it is shown that if the normal curvature $K^\bot$, scalar curvature $S$ and square of the length of second fundamental form $\sigma$ satisfy
$$K^\bot\leq\sigma \text{       ,      } S>(n-1)^2 $$
then $M^n$ is totally geodesic.

\end{abstract}

\maketitle
\thispagestyle{empty}
\baselineskip15pt
\section{Introduction}

Let $S^{n+p}$ be an $(n+p)$ dimensional unit sphere and $M$ be a compact $n$-dimensional
submanifold isometrically immersed in $S^{n+p}$.\ Let $h$ be the second fundamental form of the immersion
and $H$ be the mean curvature vector.\ Denote by $\langle\cdot,\cdot\rangle$ the scalar product of $S^{n+p}$.\ If
there exists a function $\lambda$ on $M$ such that
\begin{align}
\hskip11em \langle h(X,Y),H\rangle =\lambda \langle X,Y\rangle
\end{align}
for any tangent vectors $X$, $Y$ on $M$, then $M$ is called a Pseudo-umbilical submanifold of $S^{n+p}$ (\cite{2},\cite{10}).\
It is clear that $\lambda\geq 0$.\ If the mean curvature vector $H=0$ identically, then $M$ is called a minimal
submanifold of $S^{n+p}$.\ Every minimal submanifold of $S^{n+p}$ is itself a pseudo-umbilical submanifold.

Let $M$ be a compact $n$-dimensional Pseudo-umbilical submanifold of the unit sphere $S^{n+p}$ with normal bundle $\nu$. We denote by $R^\bot$ the curvature tensor field corresponding to the normal connection $\nabla^\perp$ in the normal bundle $\nu$ of $M^n$, and define $K^\bot :M \rightarrow R$ by
\begin{align}
\hskip9em K^\bot = \sum\limits_{i,j,\alpha,\beta} [R^\bot (e_i,e_j,N_{\alpha},N_\beta)]^2
\end{align}
where $\{e_1,...,e_n\}$ is a local orthonormal frame on $M^n$ and $\{N_1,...,N_p\}$ is a local field of orthonormal normals. we call the function $K^\bot$ the normal curvature of $M^n$.

There are several results for compact minimal submanifolds in a unit sphere. Simon \cite{si}, in his famous paper gave a pinching theorem, which led to an intrinsic rigidity result. Later on, Simon's work was improved by Sakaki \cite{sa} for arbitrary codimension. Shen \cite{sh} further improved the result of Sakaki but only for dimension $n=3$. Deshmukh \cite{de} partially generalized the result of Shen and proved $M^n$ to be totally geodesic with the impositions of certain conditions on normal curvature $K^\bot$, scalar curvature $S$ and square of the length of second fundamental form $\sigma$. On the other hand, Choudhary \cite{10} studied pseudo-umbilical hypersurfaces in the unit sphere. Inspired by all the above developments, we study Pseudo-umbilical submanifolds in a unit sphere and prove the following result.
\begin{thm}
Let $M$ be compact pseudo-umbilical submanifold of the unit sphere $S^{n+p}$ with normal bundle $\nu$. If normal curvature $K^\bot$, scalar curvature $S$ and square of the length of second fundamental form $\sigma$ satisfy
$$K^\bot\leq\sigma        \text{       ,      }        S>(n-1)^2 $$
then $M^n$ is totally geodesic.
\end{thm}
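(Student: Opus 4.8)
The plan is to establish the rigidity through a Simons-type Bochner formula for the squared norm $\sigma=\sum_{\alpha,i,j}(h^\alpha_{ij})^2$ of the second fundamental form, combined with the two pinching hypotheses and an integration over the compact manifold $M^n$. I would work in a local orthonormal frame $\{e_1,\dots,e_n\}$ adapted to $M$ together with orthonormal normals $\{N_1,\dots,N_p\}$, writing $h^\alpha_{ij}=\langle h(e_i,e_j),N_\alpha\rangle$ and letting $A_\alpha=(h^\alpha_{ij})$ be the associated shape operators. The Gauss equation for the unit sphere gives the scalar curvature as $S=n(n-1)+n^2|H|^2-\sigma$, while the Ricci equation identifies $R^\perp(e_i,e_j,N_\alpha,N_\beta)$ with the commutator $[A_\alpha,A_\beta]_{ij}$, so that $K^\perp=\sum_{\alpha,\beta}\|[A_\alpha,A_\beta]\|^2$.

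The first genuine step is to exploit the pseudo-umbilical hypothesis (1.1). Choosing $N_1=H/|H|$ wherever $H\neq0$, the defining relation forces the shape operator in the mean-curvature direction to be umbilical, $A_1=|H|\,\mathrm{Id}$, while every other operator $A_\alpha$ $(\alpha\geq2)$ is trace-free. Two consequences drive the argument: first, $A_1$ commutes with each $A_\beta$, so $K^\perp$ depends only on the trace-free part $\sum_{\alpha,\beta\geq2}\|[A_\alpha,A_\beta]\|^2$; second, setting $\tau=\sum_{\alpha\geq2}\|A_\alpha\|^2$ one obtains the orthogonal splitting $\sigma=n|H|^2+\tau$, and the scalar curvature relation rewrites as $S=n(n-1)(1+|H|^2)-\tau$.

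Next I would write out the Simons identity $\tfrac12\Delta\sigma=\|\nabla h\|^2+\sum_{\alpha,i,j}h^\alpha_{ij}(nH^\alpha)_{;ij}+n\sigma-\sum_{\alpha,\beta}\|[A_\alpha,A_\beta]\|^2-\sum_{\alpha,\beta}(\mathrm{tr}\,A_\alpha A_\beta)^2+(\text{mean-curvature cubic terms})$. Here the pseudo-umbilical structure is decisive: the Hessian term collapses to $n|H|\Delta|H|$, the cubic correction collapses to $n|H|^2\sigma$, and the quartic term obeys $\sum_{\alpha,\beta}(\mathrm{tr}\,A_\alpha A_\beta)^2\leq\sigma^2$ by Cauchy--Schwarz. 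I would then invoke the hypothesis $K^\perp\leq\sigma$ to bound the commutator term from below by $-\sigma$, and a Kato-type inequality $\|\nabla h\|^2\geq n|\nabla|H||^2$ (immediate from $A_1=|H|\,\mathrm{Id}$) to absorb the gradient contribution of the Hessian term after integration by parts.

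Finally, integrating the resulting inequality over the compact $M^n$ makes $\int_M\Delta\sigma=0$, yielding $0\geq\int_M\bigl[(\|\nabla h\|^2-n|\nabla|H||^2)+\sigma\,Q\bigr]$, where $Q$ is the quadratic coefficient assembled from $n\sigma$, $n|H|^2\sigma$, the quartic bound, and $K^\perp\leq\sigma$. The scalar-curvature hypothesis $S>(n-1)^2$, fed through $S=n(n-1)(1+|H|^2)-\tau$, is precisely what forces $Q>0$ wherever $\sigma>0$; since both bracketed terms are then nonnegative, each must vanish, giving $\sigma\equiv0$ and hence $h\equiv0$, so $M^n$ is totally geodesic. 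I expect the main obstacle to be the bookkeeping of the mean-curvature terms in the Simons identity: unlike the minimal case these do not vanish, and the crux is to show that after integration they reorganize so that $Q$ inherits the clean positivity $\sigma<n-1$ of the minimal case. Should a residual $|H|^2$ dependence survive, I would supplement the computation with the integrated Bochner identity for $|H|^2$ to control the mean curvature and, if necessary, reduce to the minimal pinching.
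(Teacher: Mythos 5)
Your strategy is the same one the paper uses---a Simons-type formula for $\sigma$, integrated over the compact $M^n$, with the two hypotheses feeding into the zeroth-order terms---and your structural preliminaries are correct and in fact cleaner than the paper's: with $N_1=H/|H|$ one does get $A_1=|H|\,\mathrm{Id}$, trace-free $A_\alpha$ for $\alpha\geq 2$, $\sigma=n|H|^2+\tau$, $S=n(n-1)(1+|H|^2)-\tau$, and the cubic term does collapse to $n|H|^2\sigma$. But the proof fails exactly at the step you yourself flag as the crux. First, the Simons identity in $S^{n+p}$ carries the ambient contribution $n\sigma-n^2|H|^2$, not $n\sigma$; the $-n^2|H|^2$ you omit never cancels. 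Second, assembling $Q$ as you describe (commutator $\geq-\sigma$ from $K^\perp\leq\sigma$, quartic $\geq-\sigma^2$, plus $n\sigma+n|H|^2\sigma$) gives $Q=(n-1)+n|H|^2-\sigma=(n-1)-\tau$, and the full zeroth-order part of the lower bound works out to $\tau\bigl[(n-1)-\tau\bigr]-n|H|^2(1+\tau)$. The hypothesis $S>(n-1)^2$ translates only into $\tau<(n-1)(1+n|H|^2)$, which does not force $\tau<n-1$ once $H\neq0$; and even where $\tau<n-1$, the term $-n|H|^2(1+\tau)$ is strictly negative. Third, the Hessian term does not collapse to $n|H|\Delta|H|$ (the frame $N_1$ is not $\nabla^\perp$-parallel); invariantly it is $\langle h,(\nabla^\perp)^2(nH)\rangle$, which after integration by parts contributes $-n^2\int_M|\nabla^\perp H|^2$, and neither your inequality $\|\nabla h\|^2\geq n\,|\nabla|H||^2$ nor the sharp general bound $\|\nabla h\|^2\geq\tfrac{3n^2}{n+2}|\nabla^\perp H|^2$ is strong enough to absorb it. So the integral inequality never closes.

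Moreover, this is not a repairable bookkeeping problem, and your fallback (a supplementary Bochner identity for $|H|^2$) cannot rescue it, because the statement is false for $H\neq0$: a small totally umbilical sphere $S^n(r)\subset S^{n+p}$ with $0<r<1$ is compact and pseudo-umbilical, has flat normal connection, hence $K^\perp=0\leq\sigma=n(1-r^2)/r^2$, and has $S=n(n-1)/r^2>n(n-1)>(n-1)^2$, yet it is not totally geodesic. Thus every execution of this strategy must leave an uncancelled negative mean-curvature term, which is exactly what you found. (For comparison, the paper's own argument stumbles at the same spot: in passing from its equation (3.5) to the conclusion it silently discards the sign-indefinite terms $\mu$ and $-n^2H^2$. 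Your write-up is more candid in isolating this obstruction, but it does not---and cannot---overcome it; only the minimal case $H\equiv0$, i.e.\ Deshmukh's original theorem, goes through.)
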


Infact, we have worked to investigate the normal curvature of Pseudo-umbilical submanifolds in a sphere and tried to generalize the results due to Deshmukh \cite{de}.

\section{Preliminaries}
Let $M$ be an n-dimensional pseudo-umbilical submanifold of the unit sphere $S^{n+p}$ with normal bundle $\nu$.\ We denote
by $g$ the Riemannian metric
on $S^{n+p}$ as well as the induced metric on $M$.\ Let $\nabla$ be the Riemannian connection
and $A$ be the shape operator
of the submanifold $M$. Then the second fundamental form $h$ of $M^n$ satisfies
\begin{align}
\hskip4em(\nabla h)(X,Y,Z)=(\nabla h)(Y,Z,X)=(\nabla h)(Z,X,Y)
\end{align}
for $X,Y,Z\in\mathcal{X}(M)$, where $\mathcal{X}(M)$ is the Lie algebra of smooth vector fields on $M$ and $(\nabla h)(X,Y,Z)$ is defined by
\begin{align*}
\hskip4em (\nabla h)(X,Y,Z)&=\nabla_X ^\perp h(Y,Z)-h(\nabla_X Y,Z)-h(Y,\nabla_X Z),
\end{align*}
where $\nabla^\perp$ is the connection defined in $\nu$. The second covariant derivative $(\nabla^2 h)(X,Y,Z,W)$ of the second fundamental form is given by
\begin{align*}
\hskip4em (\nabla^2 h)(X,Y,Z,W)&=\nabla_X ^\perp (\nabla_X) (Y,Z,W)-(\nabla h)(\nabla_X Y,Z,W)  \\
     &\quad-(\nabla h)(Y, \nabla_X Z,W)-(\nabla h)(Y,Z,\nabla_X W)
\end{align*}
for $X,Y,Z,W\in\mathcal{X}(M)$. The Ricci identity is given by
\begin{align}
(\nabla^2 h)(X,Y,Z,W)-(\nabla^2 h)(Y,X,Z,W)& = R^\perp (X,Y)h(Z,W)-h(R(X,Y)Z,W)\nonumber\\
&\quad-h(Z,R(X,Y)W)
\end{align}
for $X,Y,Z,W\in\mathcal{X}(M)$, where $R^\perp$ and $R$ are the curvature tensors of the connections $\nabla^\perp$ and $\nabla$ respectively. Since $M^n$ is a Pseudo-umbilical submanifold, then for a local orthonormal frame $\{e_1,...,e_n\}$ of $M^n$ we have
\begin{align}
\hskip9em \sum\limits_{i=1}^n h(e_i,e_i)=nH
\end{align}
We define the symmetric operator $R^*$ by using the Ricci tensor Ric in the following way
$Ric(X,Y)=g(R^* (X),Y)$,                   for $X,Y\in\mathcal{X}(M)$.
Then the Gauss equation gives
\begin{align}
A_{h(Y,Z)} X&=R(X,Y)Z+A_{h(X,Z)} Y -g(Y,Z)X+g(X,Z)Y\\
R^* (X)&=(n-1)X -\sum\limits_{i=1}^n A_{h(e_i,X)} e_i +\sum\limits_{i=1}^n A_{h(e_i,e_i)} X \\
S&=n(n-1)+n^2 H^2-\|A_\alpha\|^2
\end{align}
where $A_N ,N \in\nu$ is the Weingarten map with respect to the normal $N$, satisfying
$$g(A_N X,Y)=g((h(X,Y),N).$$
We define
\begin{align}
\hskip7em\sigma &= \sum\limits_{i,j} \|h(e_i,e_j)\|^2,\nonumber\\
\hskip7em\|A_h\|^2&=\sum\limits_{i,j,k} \|A_{h(e_i,e_j)} e_k\|^2, \\
\hskip7em\|\nabla_h\|^2&=\sum\limits_{i,j,k} \|(\nabla h) (e_i,e_j,e_k)\|^2.\nonumber
\end{align}

Now, we state the following lemmas which are required for the proof of our main theorem.

\def\Tr{\mathop{\rm Tr}}
\begin{lem}
Let $M^n$ be an immersed pseudo-umbilical submanifold of the unit sphere $S^{n+p}$, then for a local orthonormal frame $\{e_1,\ldots,e_n\}$, we have 
$\sum\limits_{i,j,k} R(e_k , e_i ; e_j , A_{h(e_i , e_j)} e_k) = n^2 H^2 - \sigma + {\|A_h\|}^2 + \frac{1}{2} K^{\perp} - \sum\limits_{i,j,\alpha,\beta} g(A_\alpha e_i , A_\beta e_j )^2$,                                                                                            where $A_\alpha \equiv A_{N_\alpha}$ and $\{N_1,\ldots,N_p\}$ is a local field of orthonormal normals.
\end{lem}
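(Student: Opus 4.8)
The plan is to feed the Gauss equation into the left-hand side and reduce everything to contractions of the shape operators $A_\alpha$. Writing $R(e_k,e_i,e_j,W)=g(R(e_k,e_i)e_j,W)$ and taking $W=A_{h(e_i,e_j)}e_k$, the Gauss equation rearranges to
\begin{align*}
R(e_k,e_i,e_j,W)&=g(A_{h(e_i,e_j)}e_k,W)-g(A_{h(e_k,e_j)}e_i,W)\\
&\quad+g(e_i,e_j)\,g(e_k,W)-g(e_k,e_j)\,g(e_i,W).
\end{align*}
Summing over $i,j,k$ splits the left-hand side into four pieces: two ``metric'' terms and two quadratic-in-$h$ terms, which I would treat separately.

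First I would dispose of the metric terms. In the term carrying $g(e_i,e_j)=\delta_{ij}$ the frame contraction produces $\sum_{i,k}g(e_k,A_{h(e_i,e_i)}e_k)$; invoking the pseudo-umbilical relation $\sum_i h(e_i,e_i)=nH$, this becomes $n\,\Tr(A_H)=n^2H^2$, since $\Tr(A_H)=\sum_k g(h(e_k,e_k),H)=nH^2$. The term carrying $g(e_k,e_j)=\delta_{kj}$ collapses to $-\sum_{i,j}g(A_{h(e_i,e_j)}e_j,e_i)=-\sum_{i,j}\|h(e_i,e_j)\|^2=-\sigma$. Thus the metric terms contribute exactly $n^2H^2-\sigma$.

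Next I would handle the quadratic terms. The first, $\sum_{i,j,k}g(A_{h(e_i,e_j)}e_k,A_{h(e_i,e_j)}e_k)$, is literally $\sum_{i,j,k}\|A_{h(e_i,e_j)}e_k\|^2=\|A_h\|^2$ by definition. For the second, $-\sum_{i,j,k}g(A_{h(e_k,e_j)}e_i,A_{h(e_i,e_j)}e_k)$, I would introduce the components $h_{ij}^\alpha=g(A_\alpha e_i,e_j)$, expand $A_{h(e_i,e_j)}=\sum_\alpha h_{ij}^\alpha A_\alpha$, and contract; using the symmetry of each $A_\alpha$ the fourfold sum telescopes into $-\sum_{\alpha,\beta}\Tr\!\big((A_\alpha A_\beta)^2\big)$.

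Finally, the heart of the matter is to recognize the remaining combination $\tfrac12 K^\perp-\sum_{i,j,\alpha,\beta}g(A_\alpha e_i,A_\beta e_j)^2$ as precisely $-\sum_{\alpha,\beta}\Tr((A_\alpha A_\beta)^2)$. Here I would invoke the Ricci equation for a submanifold of the space form $S^{n+p}$, which, because the ambient curvature has no mixed tangent--normal components, reads $R^\perp(e_i,e_j,N_\alpha,N_\beta)=g([A_\alpha,A_\beta]e_i,e_j)$; hence $K^\perp=\sum_{\alpha,\beta}\|[A_\alpha,A_\beta]\|^2$. Expanding the commutator trace with the cyclic property gives $\|[A_\alpha,A_\beta]\|^2=2\,\Tr(A_\alpha^2A_\beta^2)-2\,\Tr((A_\alpha A_\beta)^2)$, while $\sum_{i,j}g(A_\alpha e_i,A_\beta e_j)^2=\|A_\alpha A_\beta\|^2=\Tr(A_\alpha^2A_\beta^2)$. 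Substituting these two identities cancels the $\Tr(A_\alpha^2A_\beta^2)$ contributions and leaves $-\sum_{\alpha,\beta}\Tr((A_\alpha A_\beta)^2)$, matching the second quadratic term above; assembling the four pieces then yields the asserted identity. I expect the main obstacle to be exactly this last bookkeeping: keeping the index contractions and the symmetry $h_{ij}^\alpha=h_{ji}^\alpha$ straight so that the commutator structure of $K^\perp$ emerges cleanly and the $\Tr(A_\alpha^2A_\beta^2)$ terms cancel.
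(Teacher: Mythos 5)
Your proposal is correct and follows essentially the same route as the paper: expand the left-hand side via the Gauss equation into the two metric terms (giving $n^2H^2-\sigma$), the norm term $\|A_h\|^2$, and the cross term, then identify the cross term with $\frac{1}{2}K^\perp-\sum_{i,j,\alpha,\beta}g(A_\alpha e_i,A_\beta e_j)^2$ by means of the Ricci equation $R^\perp(e_i,e_j;N_\alpha,N_\beta)=g([A_\alpha,A_\beta]e_i,e_j)$. The only difference is notational: you phrase the cancellation through traces of $(A_\alpha A_\beta)^2$ and $A_\alpha^2A_\beta^2$, whereas the paper carries out the identical computation with index sums such as $\sum_{i,j,\alpha,\beta}g(A_\alpha A_\beta e_i,e_j)\,g(A_\beta A_\alpha e_i,e_j)$.
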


\begin{proof}
Using the Gauss equation, we have
\begin{align}
R(e_k , e_i ; e_j , A_{h(e_i , e_j)} e_k) &=  g(e_i ,e_j ) g(e_k ,A_{h(e_i , e_j )} e_k ) - g(e_k ,e_j ) g(e_i ,A_{h(e_i , e_j )} e_k ) \nonumber\\
&+ g(A_{h(e_i , e_j )} e_k , A_{h(e_i , e_j )} e_k ) - g(A_{h(e_k , e_j )} e_i ,A_{h(e_i , e_j )} e_k ) \nonumber\\
&=\delta_{ij} g(h(e_k , e_k ), h(e_i , e_j )) -\delta_{kj} g(h(e_i , e_j ), h(e_i , e_k )) \\
&+g(A_{h(e_i , e_j )} e_k , A_{h(e_i , e_j )} e_k )- g(h(e_k , e_j ) , h(e_i ,A_{h(e_i , e_j )} e_k ))\nonumber
\end{align}
Using the fact that, $A_{h(e_i , e_j )} e_k = \sum\limits_{\alpha} g(A_\alpha e_i , e_j )A_\alpha e_k$ , we obtain
\hskip4em \begin{align}g(h(e_k , e_j ) , h(e_i ,A_{h(e_i , e_j )} e_k )) = \sum\limits_{i,j,\alpha,\beta} g(A_\alpha A_\beta e_i , e_j)g(A_\beta A_\alpha e_i , e_j)
\end{align}
We also have
\begin{align}
\hskip8em g(A_{h(e_i , e_j )} e_k , A_{h(e_i , e_j )} e_k ) = \|A_h\|^2
\end{align}
On the other hand, using the Ricci equation
$R^{\perp} (X,Y;N_1 , N_2) = g([A_{N_1} , A_{N_2}](X), Y)$,   for $X,Y\in\mathcal{X}(M), N_1 , N_2 \in\nu,$  we have
\begin{align}
K^\perp &= \sum\limits_{i,j,\alpha,\beta} [R^\perp (e_i , e_j ; N_\alpha , N_\beta )]^2 \nonumber\\
&= \sum\limits_{i,j,\alpha,\beta} [g(A_\alpha A_\beta e_i , e_j) -  g(A_\beta A_\alpha e_i , e_j)]^2 \nonumber\\
&= 2 \sum\limits_{i,j,\alpha,\beta}  g(A_\alpha e_i , A_\beta e_j)^2 -  2 \sum\limits_{i,j,\alpha,\beta} g(A_\alpha A_\beta e_i , e_j)  g(A_\beta A_\alpha e_i , e_j)\nonumber\\
\Rightarrow \frac{1}{2} K^\perp - &\sum\limits_{i,j,\alpha,\beta}  g(A_\alpha e_i , A_\beta e_j)^2 = - \sum\limits_{i,j,\alpha,\beta} g(A_\alpha A_\beta e_i , e_j)  g(A_\beta A_\alpha e_i , e_j)
\end{align}
where we have used $\sum\limits_{i,j,\alpha,\beta}  g(A_\alpha e_j , A_\beta e_i)^2 = \sum\limits_{i,j,\alpha,\beta}  g(A_\beta e_j , A_\alpha e_i)^2$ which follows from the symmetry of $A_\alpha$ and $A_\beta$.

Finally, in the light of equations (2.9), (2.10) and (2.11) and using the fact that $M^n$ is a Pseudo-umbilical submanifold, equation (2.8) reduces to \\
$\sum\limits_{i,j,k} R(e_k , e_i ; e_j , A_{h(e_i , e_j)} e_k) = n^2 H^2 - \sigma + {\|A_h\|}^2 + \frac{1}{2} K^{\perp} - \sum\limits_{i,j,\alpha,\beta} g(A_\alpha e_i , A_\beta e_j )^2     $\\
and this completes the proof of the lemma.
\end{proof}

Next, we prove

\mathindent4em
\begin{lem}
Let $M^n$ be a pseudo-umbilical submanifold of the unit sphere $S^{n+p}$, then for a local orthonormal frame $\{e_1,\ldots,e_n\}$, we have
\begin{align*}
\hskip-4em\sum\limits_{i,j,k}  R^\perp (e_k, e_i ; h(e_k , e_j) , h(e_i , e_j )) &= \|A_h\|^2 + n^2 H^2 -n \sigma    \nonumber\\
&+ \sum\limits_{i,j,k} [ R(e_i ,e_k , e_j , A_{h(e_i,e_j)} e_k )  - R(e_k ,e_i , e_k , A_{h(e_i,e_j)} e_i ) ]  \nonumber\\
&-\sum\limits_{i,j,k} g( A_{h(e_k,e_k)} e_j , A_{h(e_i,e_j)} e_i ).
\end{align*}
\end{lem}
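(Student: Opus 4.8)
The plan is to push everything down to the shape operators through the Ricci equation and then trade products of Weingarten maps for intrinsic curvature via the Gauss equation (2.6), in the same spirit as Lemma 2.1. First I would invoke the Ricci equation $R^{\perp}(X,Y;N_1,N_2)=g([A_{N_1},A_{N_2}]X,Y)$ with $X=e_k$, $Y=e_i$, $N_1=h(e_k,e_j)$ and $N_2=h(e_i,e_j)$, so that the left-hand side becomes $\sum_{i,j,k} g\big(A_{h(e_k,e_j)}A_{h(e_i,e_j)}e_k-A_{h(e_i,e_j)}A_{h(e_k,e_j)}e_k,\,e_i\big)$. I would label the two contributions $T_1$ and $T_2$. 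Using the symmetry of each $A_\alpha$ I would rewrite them as $T_1=\sum g(A_{h(e_i,e_j)}e_k,A_{h(e_k,e_j)}e_i)$ and $T_2=-\sum g(A_{h(e_k,e_j)}e_k,A_{h(e_i,e_j)}e_i)$, a form in which exactly one Weingarten map in each summand is ready to be expanded.

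The core step is to apply (2.6) to that outer Weingarten map in each sum. In $T_1$ I would expand $A_{h(e_k,e_j)}e_i$ (taking $Y=e_k$, $Z=e_j$, $X=e_i$): the curvature piece $R(e_i,e_k)e_j$ produces the first listed term $\sum R(e_i,e_k,e_j,A_{h(e_i,e_j)}e_k)$, the term $A_{h(e_i,e_j)}e_k$ reassembles as $\|A_h\|^2$, the correction $-g(e_k,e_j)e_i$ contributes $-\sigma$, and the correction $+g(e_i,e_j)e_k$ produces $\sum_{i,k} g(h(e_k,e_k),h(e_i,e_i))$, which by the pseudo-umbilical relation (2.5) $\sum_i h(e_i,e_i)=nH$ equals $n^2H^2$. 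In $T_2$ I would instead expand $A_{h(e_k,e_j)}e_k=A_{h(e_j,e_k)}e_k$ (taking $Y=e_j$, $Z=e_k$, $X=e_k$): the curvature piece $R(e_k,e_j)e_k$ delivers the remaining curvature term on the right-hand side, the term $A_{h(e_k,e_k)}e_j$ yields precisely $-\sum g(A_{h(e_k,e_k)}e_j,A_{h(e_i,e_j)}e_i)$, while the two metric corrections $-g(e_j,e_k)e_k$ and $+g(e_k,e_k)e_j$ contribute $+\sigma$ and $-n\sigma$ respectively, the latter acquiring the factor $n$ from the now-free summation over $k$.

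Finally I would add $T_1$ and $T_2$: the spurious $-\sigma$ from $T_1$ and $+\sigma$ from $T_2$ cancel, leaving $\|A_h\|^2+n^2H^2-n\sigma$ together with the two curvature terms and the $A_{h(e_k,e_k)}$ term, which is the claimed identity. I expect the only genuine difficulty to be the index bookkeeping: one must choose, in each of $T_1$ and $T_2$, the substitution in (2.6) that leaves the \emph{other} factor $A_{h(e_i,e_j)}$ untouched, so that the quadratic and curvature pieces assemble cleanly, and one must keep careful track of which metric correction still carries the free index $k$, so that the $\sigma$ and $n\sigma$ contributions emerge with the correct multiplicities and cancel as required.
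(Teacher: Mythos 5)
Your proof is correct and is essentially the paper's own argument: both start by converting the left-hand side into the commutator sum $\sum_{i,j,k} g\big([A_{h(e_k,e_j)},A_{h(e_i,e_j)}]e_k,\,e_i\big)$ via the Ricci equation, and both then expand by the Gauss equation. The only difference in execution is in your $T_2$: the paper does not apply the Gauss equation pointwise there, but uses its contracted form (2.5), pairing $R^*(e_j)$ with $A_{h(e_i,e_j)}e_i$ and then converting $\mathrm{Ric}$ back into curvature components --- the same algebra, since (2.5) is precisely the trace over $k$ of (2.4). (Your citations are shifted: the Gauss equation is (2.4), not (2.6), and the trace identity $\sum_i h(e_i,e_i)=nH$ is (2.3), not (2.5).)

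One discrepancy deserves note, and it is in your favor. Your $T_2$ produces the curvature term $-\sum_{i,j,k} R(e_k,e_j,e_k,A_{h(e_i,e_j)}e_i)$, while the lemma as printed reads $-\sum_{i,j,k} R(e_k,e_i,e_k,A_{h(e_i,e_j)}e_i)$. These are not the same expression, and the printed one is not even a legitimate contraction: the index $i$ occurs three times and $j$ only once, so the expression is frame-dependent. The printed version is an index typo (present also in the paper's equation (2.13), where $\mathrm{Ric}(e_j,\cdot)$ is expanded with $e_i$ in place of $e_j$ in the second slot of $R$); your form is the correct one, and it is the form the paper itself uses afterwards, e.g.\ the term $R(e_k,e_i;e_k,A_{h(e_j,e_i)}e_j)$ in the proof of Theorem 1.1, which coincides with yours after relabeling $i\leftrightarrow j$.
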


\begin{proof}
Let $M^n$ be a pseudo-umbilical submanifold of the unit sphere $S^{n+p}$. Then, using (2.4) in the Ricci equation, and in the light of (2.7), we obtain
\begin{align}
\sum\limits_{i,j,k}  R^\perp (e_k, e_i ; h(e_k , e_j) , h(e_i , e_j )) &= \|A_h\|^2 + \sum\limits_{i,j,k} [ R(e_i , e_k ; e_j , A_{h(e_i ,e_j )} e_k) \nonumber\\
&- g(A_{h(e_i ,e_j )} e_j ,e_i) + g(h(e_j , e_j ), h(e_k , e_k )) \nonumber\\
&- g(A_{h(e_k ,e_j )} e_k , A_{h(e_i ,e_j )} e_i) ]
\end{align}
Now, taking into consideration equation (2.5), we have
\begin{align*}
R^* (e_j)=(n-1)e_j -\sum\limits_{k}^n A_{h(e_j,e_k)} e_k +\sum\limits_{k}^n A_{h(e_k,e_k)} e_j
\end{align*}
Taking inner product with $A_{h(e_i,e_j)} e_i$ in the above equation, we obtain  
\begin{align*}
\hskip-3em\sum\limits_{i,j} g(R^* (e_j) , A_{h(e_i,e_j)} & e_i ) =(n-1) \sum\limits_{i,j} g(A_{h(e_i,e_j)} e_i , e_j) \\
&+\sum\limits_{i,j,k} g( A_{h(e_k,e_k)} e_j , A_{h(e_i,e_j)} e_i ) -\sum\limits_{i,j,k} g(A_{h(e_j,e_k)} e_k , A_{h(e_i,e_j)} e_i )
\end{align*}
So, in the light of equation (2.7), above equation gives 
\begin{align}
\hskip-4em\sum\limits_{i,j,k} g(A_{h(e_j,e_k)} e_k ,  A_{h(e_i,e_j)} e_i ) &= (n-1) \sum\limits_{i,j} g(h(e_i,e_j) , h(e_i , e_j)) \nonumber\\
& +\sum\limits_{i,j,k} g( A_{h(e_k,e_k)} e_j , A_{h(e_i,e_j)} e_i ) - \sum\limits_{i,j} g(R^* (e_j) , A_{h(e_i,e_j)} e_i )\nonumber\\
& = (n-1)\sigma + +\sum\limits_{i,j,k} g( A_{h(e_k,e_k)} e_j , A_{h(e_i,e_j)} e_i ) \nonumber\\
&- \sum\limits_{i,j} Ric(e_j , A_{h(e_i,e_j)} e_i )\nonumber\\
&=(n-1)\sigma  +\sum\limits_{i,j,k} g( A_{h(e_k,e_k)} e_j , A_{h(e_i,e_j)} e_i ) \nonumber\\
&+ \sum\limits_{i,j,k} R(e_k ,e_i , e_k , A_{h(e_i,e_j)} e_i ) 
\end{align}
where, we have used the relationship between operator $R^*$ and Ricci tensor Ric. Thus, using using (2.13) in (2.14), we have the required result.
\end{proof}

\section{Main Results}

\begin{proof}[Proof of Theorem~{\rm1.1}]
Let $M^n$ be a compact Pseudo-umbilical submanifold of $S^{n+p}$ satisfying the hypothesis of the theorem. We define a function \\
     $F : M \rightarrow R      \hskip1em \text{by}
     \hskip1em   F=\frac{1}{2} \sigma.$ \\
Then, the Laplacian $\triangle F$ of the function $F$ can be computed as
\begin{align*}
\triangle F = \sum\limits_{i,j,k} \|(\nabla h)(e_i , e_j , e_k) \|^2 + \sum\limits_{i,j,k} g((\nabla^2 h)(e_k ,e_k, e_i , e_j ),h(e_i , e_j))
\end{align*}
In view of equation (2.1) and the Ricci identity (2.2), above equation gives
\begin{align*}
\triangle F &= \sum\limits_{i,j,k} \|(\nabla h)(e_i , e_j , e_k) \|^2 + \sum\limits_{i,j,k} [ R^\perp (e_k, e_i ; h(e_k , e_j) , h(e_i , e_j )) \nonumber\\
 &-R(e_k , e_i ; e_k , A_{h(e_j ,e_i )} e_j) - R(e_k , e_i ; e_j , A_{h(e_i ,e_j )} e_k)] + \mu
\end{align*}
where, we have assumed $\mu = \sum\limits_{i,j,k} g((\nabla^2 h)(e_j , e_i , e_k ,e_k ) , h( e_i , e_j ))$. 
Now, taking into account equation (2.7), one can write above equation as follows
\begin{align}
\triangle F &= \|\nabla h \|^2 + \sum\limits_{i,j,k} [ R^\perp (e_k, e_i ; h(e_k , e_j) , h(e_i , e_j )) \nonumber\\
 &-R(e_k , e_i ; e_k , A_{h(e_j ,e_i )} e_j) - R(e_k , e_i ; e_j , A_{h(e_i ,e_j )} e_k)] + \mu
\end{align}
On the other hand,
\begin{align*}
\hskip2em \sum\limits_{i,j,k} R(e_k , e_i ; e_k , A_{h(e_i ,e_j )} e_j) 
&= - \sum\limits_{i,j} Ric(e_i , A_{h(e_i , e_j )} e_j)     \\         
&= - \sum\limits_{i,j} g(R^* e_i , A_{h(e_i , e_j )} e_j), 
\end{align*}
since $A_{h(e_i ,e_j)} e_k = \sum\limits_{\alpha} g(A_\alpha e_i, e_j) A_\alpha e_k $, we obtain
\begin{align}
\sum\limits_{i,j,k} R(e_k , e_i ; e_k , A_{h(e_i ,e_j )} e_j) 
&= - \sum\limits_{i,j,\alpha} g(R^* e_i , A_\alpha e_j ) g( A_\alpha e_i, e_j) \nonumber\\
&= - \sum\limits_{i,j,\alpha} g(R^* A_\alpha e_j ,e_i) g( A_\alpha e_j, e_i) \nonumber\\
&= - \sum\limits_{i,j,\alpha} g(R^* A_\alpha e_j , A_\alpha e_j) \nonumber\\
&= - \sum\limits_{j,\alpha} Ric(A_\alpha e_j , A_\alpha e_j) \nonumber\\
&= -(n-1)\sigma + \sum\limits_{i,j,\alpha , \beta} g(A_\beta e_i , A_\alpha e_j)^2 - n^2 H^2 \|A_\alpha\|^2
\end{align}
Using (3.2) in (3.1) and taking view of lemma 2.1 and lemma 2.2, we have
\begin{align}
\triangle F &= (\|\nabla h \|^2 -\|A_h \|^2) - n^2 H^2 +n \sigma - K^{\perp} +\mu + n^2 H^2 \|A_\alpha \|^2  
\end{align}
In the light of (2.7), we also have
\begin{align}
\|A_h \|^2  &= \sum\limits_{i,j,k} \|A_{h(e_i ,e_j )} e_k\|^2 = \sum\limits_{i,j,k,\alpha} g(A_\alpha e_i , e_j)^2 \|A_\alpha e_k\|^2 \nonumber\\
&=\sum\limits_{i,j,\alpha} g(A_\alpha e_i , e_j)^2 \|A_\alpha \|^2   =  \sum\limits_{\alpha} \|A_\alpha\|^2 \|A_\alpha \|^2 \nonumber\\
&= \|A_\alpha \|^4
\end{align}
Let us suppose that $\sigma = \sum\limits_{\alpha} \|A_\alpha\|^2$. Then, from (3.3) and (3.4), we have
\begin{align*}
\hskip-3em\triangle F &= [(n-1) -\{\|A_\alpha\|^2 - n^2 H^2 \}]\|A_\alpha \|^2  + (\sigma - K^{\perp}) +\mu +  \|\nabla h\|^2 -n^2 H^2
\end{align*}
Integrating over $M^n$, we obtain
\begin{align}
\hskip-3em\int_M\{ \sum\limits_{\alpha} [(n-1) -\{\|A_\alpha\|^2 - n^2 H^2 \}]\|A_\alpha \|^2  + (\sigma - K^{\perp}) +\mu +  \|\nabla h\|^2 -n^2 H^2 \} dv = 0.
\end{align}
Now, as per the hypothesis of the theorem 
$ S>(n-1)^2$, it follows that
$ n(n-1) +n^2 H^2 - \sum\limits_{\alpha} \|A_\alpha \|^2 >(n-1)^2$.
That is, 
$ n^2 H^2 - \sum\limits_{\alpha} \|A_\alpha \|^2 <(n-1)$, and hence, $ n^2 H^2 - \|A_\alpha \|^2 >(n-1)$, and that $K^\perp \leq \sigma$. Thus, in order for (3.5) to hold, we must have $\|A_\alpha\|=0$, that is $M^n$ is totally geodesic and this proves the theorem.
\end{proof}

\mbox{}

\begin{thebibliography}{99}

\bibitem{2} B. Y.~Chen,
Some results of chern-do-carmo-kobayashi type and the length of the second fundamental form,
\emph{Indiana University Mathematical Journal} \textbf{20}(1971), 1175-1185.

\bibitem{10} M. A. ~Choudhary,
First non-zero eigenvalue of a pseudo-umbilical hypersurface in the unit sphere,
\emph{Russian Mathematics} \textbf{58} (2014), 56-64.

\bibitem{de} S.~Deshmukh,
Normal curvature of minimal submanifolds in a sphere,
\emph{Glasgow Math. J.} \textbf{39} (1997), 29-33.

\bibitem{sa} M.~Sakaki,
Remarks on the rigidity and stability of minimal submanifolds,
\emph{Proc. Amer. Math. Soc.} \textbf{106} (1989), 793-795.

\bibitem{sh} Y. B.~Shen,
Curvature pinching for the three-dimensional minimal submanifolds in a sphere,
\emph{Proc. Amer, Math. Soc.} \textbf{115} (1992), 791-795.

\bibitem{si} J.~Simons,
Minimal varieties in Riemannian manifolds,
\emph{Ann. of Math.} \textbf{88} (1968), 62-105.


\end{thebibliography}
\end{document}